\documentclass[11pt]{article}

\usepackage[a4paper,margin=1in]{geometry}
\usepackage{amsmath,amssymb,amsthm,mathtools}
\usepackage{enumitem}
\usepackage{hyperref}
\usepackage{microtype}
\usepackage{tikz}

\hypersetup{
    colorlinks=true,
    linkcolor=blue,
    citecolor=red,
    urlcolor=blue
}

\newtheorem{theorem}{Theorem}[section]
\newtheorem{lemma}[theorem]{Lemma}
\newtheorem{proposition}[theorem]{Proposition}
\newtheorem{conjecture}[theorem]{Conjecture}

\theoremstyle{definition}

\newcommand{\per}{\operatorname{per}}
\newcommand{\M}{\mathcal M}

\title{Counterexamples to a Conjecture on Laplacian Ratios of Trees}
\author{
Priyanshu Pant\thanks{Indian Institute of Technology Indore; website: \url{https://sites.google.com/view/priyanshupant}.}\\
\href{mailto:priyanshupant03@gmail.com}{priyanshupant03@gmail.com}
}
\date{}

\begin{document}

\maketitle

\begin{abstract}
For a graph \(G\) with no isolated vertices, its Laplacian ratio is defined as
\[
    \pi(G)=\frac{\per(L(G))}{\prod_{v\in V(G)} d(v)},
\]
where \(L(G)\) is the Laplacian matrix of \(G\), \(d(v)\) is the degree of
\(v\), and \(\per\) denotes the permanent.  Brualdi and Goldwasser
\cite{BG1984} asked for the maximum value of \(\pi(T)\) among trees \(T\) with
a fixed number of vertices.  Wu, Dong and Lai \cite{WDL2025} recently proposed
a conjectural answer to this problem.  We give infinite families of
counterexamples to their conjecture.
\end{abstract}


\noindent\textbf{Keywords.}
Laplacian matrix, permanent, tree, matching, extremal graph.

\section{Introduction}

All graphs in this paper are finite and simple.  For a graph \(G\), we write
\(V(G)\) and \(E(G)\) for its vertex set and edge set.  For a vertex
\(v\in V(G)\), let \(d(v)\) denote its degree.  We write \(A(G)\) for the
adjacency matrix of \(G\), and \(D(G)\) for the diagonal matrix whose
\(v\)-th diagonal entry is \(d(v)\).  The Laplacian matrix of \(G\) is
\[
    L(G)=D(G)-A(G).
\]
For an \(n\times n\) matrix \(M=(m_{ij})\), its permanent is
\[
    \per(M)
    =
    \sum_{\sigma\in S_n}\prod_{i=1}^n m_{i,\sigma(i)},
\]
where \(S_n\) is the symmetric group on \(\{1,\ldots,n\}\).
If \(G\) has no isolated vertices, its Laplacian ratio is defined by
\[
    \pi(G)
    =
    \frac{\per(L(G))}
    {\prod_{v\in V(G)} d(v)}.
\]

Brualdi and Goldwasser \cite{BG1984} initiated the study of permanents of
Laplacian matrices of trees and bipartite graphs.  For trees, they proved the
sharp bounds
\[
    2(n-1) \le \per(L(T)) \le \per(L(P_n)),
\]
where \(T\) is a tree on \(n\) vertices and \(P_n\) is the path on \(n\)
vertices.  The left equality holds if and only if \(T\) is a star, and the
right equality holds if and only if \(T\) is a path.  The upper bound can be
written explicitly as
\[
    \per(L(P_n))
    =
    \left(1-\frac{\sqrt2}{2}\right)(1+\sqrt2)^n
    +
    \left(1+\frac{\sqrt2}{2}\right)(1-\sqrt2)^n .
\]
Brualdi and Goldwasser also proved that every tree \(T\) satisfies
\[
    \pi(T)\ge 2,
\]
with equality if and only if \(T\) is a star.  They then asked for extremal
results for the Laplacian ratio of trees.  In particular, they asked for the
maximum value of \(\pi(T)\) among all trees with a fixed number of vertices.

Wu, Dong and Lai \cite{WDL2025} recently studied this maximum problem and
proposed the following conjecture.

\begin{conjecture}[Wu--Dong--Lai, Conjecture 1.2 \cite{WDL2025}]
Let \(T\) be a tree on \(n\) vertices.
\begin{enumerate}[label=\textup{(\roman*)}]
    \item If \(n\) is odd, then
    \[
        \pi(T) \le 2\left(\frac32\right)^{(n-3)/2},
    \]
    with equality if and only if
    \[
        T=S\left(n,\frac{n-1}{2}\right).
    \]

    \item If \(n=4k\), then
    \[
        \pi(T)
        \le
        \frac{4n^2+8n+24}{n(n+4)}
        \left(\frac32\right)^{(n-6)/2},
    \]
    with equality if and only if
    \[
        T=S\left(n,\frac{n+2}{2},\frac{n-2}{2}\right).
    \]

    \item If \(n=4k+2\), then
    \[
        \pi(T)
        \le
        \frac{4n^2+8n+40}{(n+2)^2}
        \left(\frac32\right)^{(n-6)/2},
    \]
    with equality if and only if
    \[
        T=S\left(n,\frac n2,\frac n2\right).
    \]
\end{enumerate}
\end{conjecture}

Here \(S(n,(n-1)/2)\) is the subdivided star obtained from the star on
\((n+1)/2\) vertices by attaching one pendant edge to each noncentral vertex.
The tree \(S(n,a,b)\) is obtained from two such subdivided stars with \(a\) and
\(b\) vertices by adding an edge between their centers.

\begin{figure}[h]
\centering
\begin{tikzpicture}[
    scale=0.85,
    vertex/.style={circle, fill=black, inner sep=1.7pt},
    labelnode/.style={rectangle, fill=none, inner sep=1pt}
]
    \node[vertex, label=below:\(\)] (c) at (0,0) {};
    \node[vertex] (u1) at (0,1.0) {};
    \node[vertex] (v1) at (0,1.85) {};
    \node[vertex] (u2) at (0.9,0.45) {};
    \node[vertex] (v2) at (1.65,0.85) {};
    \node[vertex] (u3) at (0.9,-0.45) {};
    \node[vertex] (v3) at (1.65,-0.85) {};
    \node[vertex] (u4) at (0,-1.0) {};
    \node[vertex] (v4) at (0,-1.85) {};
    \node[vertex] (u5) at (-0.9,-0.45) {};
    \node[vertex] (v5) at (-1.65,-0.85) {};
    \node[vertex] (u6) at (-0.9,0.45) {};
    \node[vertex] (v6) at (-1.65,0.85) {};
    \draw (c)--(u1)--(v1) (c)--(u2)--(v2) (c)--(u3)--(v3)
          (c)--(u4)--(v4) (c)--(u5)--(v5) (c)--(u6)--(v6);
    \node[labelnode] at (1.55,1.35) {\(\cdots\)};
    \node[labelnode] at (0,-2.45) {\(S(n,(n-1)/2)\)};

    \begin{scope}[xshift=5.8cm]
        \node[vertex, label=below:\(\)] (cx) at (0,0) {};
        \node[vertex, label=below:\(\)] (cy) at (2.8,0) {};
        \draw (cx)--(cy);
        \node[vertex] (xu1) at (-0.55,0.85) {};
        \node[vertex] (xv1) at (-1.05,1.55) {};
        \node[vertex] (xu2) at (-0.95,0) {};
        \node[vertex] (xv2) at (-1.75,0) {};
        \node[vertex] (xu3) at (-0.55,-0.85) {};
        \node[vertex] (xv3) at (-1.05,-1.55) {};
        \node[vertex] (xu4) at (0,1.0) {};
        \node[vertex] (xv4) at (0,1.75) {};
        \node[vertex] (yu1) at (3.35,0.85) {};
        \node[vertex] (yv1) at (3.85,1.55) {};
        \node[vertex] (yu2) at (3.75,0) {};
        \node[vertex] (yv2) at (4.55,0) {};
        \node[vertex] (yu3) at (3.35,-0.85) {};
        \node[vertex] (yv3) at (3.85,-1.55) {};
        \node[vertex] (yu4) at (2.8,1.0) {};
        \node[vertex] (yv4) at (2.8,1.75) {};
        \draw (cx)--(xu1)--(xv1) (cx)--(xu2)--(xv2)
              (cx)--(xu3)--(xv3) (cx)--(xu4)--(xv4)
              (cy)--(yu1)--(yv1) (cy)--(yu2)--(yv2)
              (cy)--(yu3)--(yv3) (cy)--(yu4)--(yv4);
        \node[labelnode] at (-0.15,-1.35) {\(\cdots\)};
        \node[labelnode] at (2.95,-1.35) {\(\cdots\)};
        \node[labelnode] at (1.4,-2.45) {\(S(n,a,b)\)};
    \end{scope}
\end{tikzpicture}
\caption{The trees \(S(n,(n-1)/2)\) and \(S(n,a,b)\) appearing in the
Wu--Dong--Lai conjecture.}
\label{fig:wdl-trees}
\end{figure}

We show that this conjecture is false in all three cases.  The counterexamples
come from a family of trees \(T(a_1,\ldots,a_m)\), defined in
Section~\ref{sec:pathcore}.  For this family we obtain an explicit formula
\[
    \pi(T(a_1,\ldots,a_m))
    =
    \left(\frac32\right)^{a_1+\cdots+a_m} f_m,
\]
where \(f_m\) is given by a simple two-term recurrence.  Applying this formula
to the subfamilies
\[
    T(3,t,3),\qquad T(t,t,t,t),\qquad T(t,t,t+1,t)
    \quad (t\ge4),
\]
gives strict counterexamples to the three proposed upper bounds.  We also show
that the equality statement in the odd case already fails at \(n=21\).

\section{The family \texorpdfstring{\(T(a_1,\ldots,a_m)\)}{T(a1,...,am)}}
\label{sec:pathcore}

We now introduce the family of trees used for the counterexamples.  Let
\(m\ge2\), and let \(a_1,\ldots,a_m\) be nonnegative integers.  Start with the
path
\[
    x_1-x_2-\cdots-x_m .
\]
For each \(i\), attach \(a_i\) pendant paths of length two to the vertex
\(x_i\).  We denote the resulting tree by
\[
    T(a_1,\ldots,a_m).
\]
Thus each attached path has the form
\[
    x_i-y-z,
\]
where \(z\) is a leaf.  We call the path \(x_1x_2\cdots x_m\) the core path.

\begin{figure}[h]
\centering
\begin{tikzpicture}[
    scale=0.9,
    vertex/.style={circle, fill=black, inner sep=1.6pt},
    smallvertex/.style={circle, fill=black, inner sep=1.4pt},
    labelnode/.style={rectangle, fill=none, inner sep=1pt}
]

\node[vertex, label=below:\(x_1\)] (x1) at (0,0) {};
\node[vertex] (x2) at (2,0) {};
\node[labelnode] at (2,-0.5) {\(x_2\)};
\node[labelnode] at (3.35,0) {\(\cdots\)};
\node[vertex, label=below:\(x_i\)] (xi) at (4.7,0) {};
\node[labelnode] at (6.05,0) {\(\cdots\)};
\node[vertex] (xm) at (7.4,0) {};
\node[labelnode] at (7.4,-0.55) {\(x_m\)};

\draw (x1)--(x2);
\draw (x2)--(2.85,0);
\draw (3.85,0)--(xi);
\draw (xi)--(5.55,0);
\draw (6.55,0)--(xm);

\node[smallvertex] (x1a1) at (-0.45,0.8) {};
\node[smallvertex] (x1b1) at (-0.9,1.45) {};
\node[smallvertex] (x1a2) at (0.45,0.8) {};
\node[smallvertex] (x1b2) at (0.9,1.45) {};
\draw (x1)--(x1a1)--(x1b1);
\draw (x1)--(x1a2)--(x1b2);
\node[labelnode] at (0,1.95) {\(a_1\) paths};

\node[smallvertex] (x2a1) at (1.55,-0.8) {};
\node[smallvertex] (x2b1) at (1.1,-1.45) {};
\node[smallvertex] (x2a2) at (2.45,-0.8) {};
\node[smallvertex] (x2b2) at (2.9,-1.45) {};
\draw (x2)--(x2a1)--(x2b1);
\draw (x2)--(x2a2)--(x2b2);
\node[labelnode] at (2,-1.95) {\(a_2\) paths};

\node[smallvertex] (xia1) at (4.25,0.8) {};
\node[smallvertex] (xib1) at (3.8,1.45) {};
\node[smallvertex] (xia2) at (5.15,0.8) {};
\node[smallvertex] (xib2) at (5.6,1.45) {};
\draw (xi)--(xia1)--(xib1);
\draw (xi)--(xia2)--(xib2);
\node[labelnode] at (4.7,1.95) {\(a_i\) paths};

\node[smallvertex] (xma1) at (6.95,-0.8) {};
\node[smallvertex] (xmb1) at (6.5,-1.45) {};
\node[smallvertex] (xma2) at (7.85,-0.8) {};
\node[smallvertex] (xmb2) at (8.3,-1.45) {};
\draw (xm)--(xma1)--(xmb1);
\draw (xm)--(xma2)--(xmb2);
\node[labelnode] at (7.4,-1.95) {\(a_m\) paths};

\end{tikzpicture}
\caption{The tree \(T(a_1,\ldots,a_m)\).  The core is the path
\(x_1-x_2-\cdots-x_m\), and \(a_i\) pendant paths of length two are attached to
\(x_i\).}
\label{fig:T-family}
\end{figure}

The number of vertices is
\[
    |V(T(a_1,\ldots,a_m))|
    =
    m+2(a_1+\cdots+a_m).
\]
The degrees of the core vertices are
\[
    d(x_i)=
    \begin{cases}
        a_i+1, & i=1 \text{ or } i=m,\\
        a_i+2, & 1<i<m.
    \end{cases}
\]
All other vertices have degree \(1\) or \(2\).

We now compute the Laplacian ratio of \(T(a_1,\ldots,a_m)\).  The computation
is based on matchings.  Recall that a matching in a graph is a set of edges no
two of which share a common endpoint.  We write \(\M(T)\) for the set of all
matchings of a tree \(T\), including the empty matching.  If \(M\) is a
matching, then \(V(M)\) denotes the set of vertices covered by the edges of
\(M\).

\begin{lemma}\label{lem:matching}
Let \(T\) be a tree with no isolated vertices, and label its vertices by
\(1,\ldots,n\).  Then
\[
    \pi(T)
    =
    \sum_{M\in \M(T)}
    \prod_{uv\in M}\frac{1}{d(u)d(v)}.
\]
\end{lemma}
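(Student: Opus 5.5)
We expand the permanent over permutations and use the fact that a tree has no cycles. The plan is to show that only involutions contribute, that these correspond exactly to matchings, and that each matching contributes \(\prod_{v\notin V(M)}d(v)\); dividing by \(\prod_v d(v)\) then gives the stated sum.

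First, recall \(L(T)_{uu}=d(u)\), \(L(T)_{uv}=-1\) when \(uv\in E(T)\), and \(L(T)_{uv}=0\) otherwise. A permutation \(\sigma\in S_n\) gives a nonzero term \(\prod_i L_{i,\sigma(i)}\) only if, for every non-fixed point \(i\), the pair \(i\sigma(i)\) is an edge of \(T\).

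Second, we classify these permutations. Decompose \(\sigma\) into disjoint cycles. A cycle \((i_1\,i_2\cdots i_k)\) with \(k\ge3\) would need all of \(i_1i_2,\ldots,i_{k-1}i_k,i_ki_1\) to be edges. These form a closed walk through \(k\ge3\) distinct vertices, i.e.\ a cycle of \(T\), which is impossible in a tree. So every contributing \(\sigma\) is an involution. Its \(2\)-cycles \((u\,v)\) are edges of \(T\) and are pairwise vertex-disjoint, so they form a matching \(M\). Conversely, every matching \(M\in\M(T)\) determines such an involution, and this correspondence is a bijection.

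Third, we compute each contribution. The involution attached to \(M\) contributes
\[
\prod_{uv\in M}L_{uv}L_{vu}\prod_{w\notin V(M)}d(w)=\prod_{uv\in M}(-1)^2\prod_{w\notin V(M)}d(w)=\prod_{w\notin V(M)}d(w).
\]
All signs cancel because each transposition uses two entries equal to \(-1\). Summing over \(M\in\M(T)\) gives
\[
\per(L(T))=\sum_{M\in\M(T)}\prod_{w\notin V(M)}d(w).
\]
Since \(T\) has no isolated vertices, every \(d(v)\ge1\), so we may divide by \(\prod_{v}d(v)\). Because \(M\) is a matching, each vertex of \(V(M)\) lies in exactly one edge of \(M\), so
\[
\frac{\prod_{w\notin V(M)}d(w)}{\prod_{v}d(v)}=\prod_{uv\in M}\frac{1}{d(u)d(v)},
\]
which is the claimed formula.

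The only step needing any care is the second one: the reduction to involutions uses acyclicity, specifically that a \(k\)-cycle of \(\sigma\) with \(k\ge3\) traces a genuine cycle of \(T\) on distinct vertices. The rest is bookkeeping, and the vertex labelling plays no role because the permanent is invariant under simultaneous row and column permutation.
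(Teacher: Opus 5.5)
Your proof is correct and follows the paper's argument essentially step for step. You expand the permanent, use acyclicity to exclude cycles of length at least three in \(\sigma\), identify the surviving involutions with matchings, note that each transposition contributes \((-1)(-1)=1\), and divide by \(\prod_v d(v)\); your extra remarks that the division is legitimate and that the labelling is irrelevant are points the paper leaves implicit.
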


\begin{proof}
By the definition of the permanent,
\[
    \per(L(T))
    =
    \sum_{\sigma\in S_n} \prod_{i=1}^n L(T)_{i,\sigma(i)}.
\]
We examine which permutations \(\sigma\) give a nonzero term.  For the
Laplacian matrix of \(T\),
\[
    L(T)_{i,i}=d(i),
    \qquad
    L(T)_{i,j}=
    \begin{cases}
        -1, & ij\in E(T),\\
        0, & ij\notin E(T),
    \end{cases}
    \quad (i\ne j).
\]
Thus, if a term is nonzero, then whenever \(\sigma(i)\ne i\), the vertices
\(i\) and \(\sigma(i)\) must be adjacent in \(T\).
Now write \(\sigma\) as a product of disjoint cycles.  Suppose one of these
cycles has length at least three, say
\[
    (v_1\,v_2\,\cdots\,v_r),
    \qquad r\ge3.
\]
For the corresponding term to be nonzero, all edges
\[
    v_1v_2,\ v_2v_3,\ \ldots,\ v_{r-1}v_r,\ v_rv_1
\]
would have to belong to \(T\).  These edges form a cycle in \(T\), which is
impossible because \(T\) is a tree.  Therefore every nonzero term comes only
from fixed points and disjoint transpositions.

The transpositions in such a permutation are disjoint edges of \(T\), hence
they form a matching \(M\).  Conversely, every matching \(M\) gives one such
permutation: transpose the endpoints of each edge in \(M\), and fix all other
vertices.
For a transposition \(uv\in M\), the contribution is
\[
    L(T)_{u,v}L(T)_{v,u}=(-1)(-1)=1.
\]
Every vertex not covered by \(M\) is fixed and contributes its diagonal entry
\(d(v)\).  Hence the contribution of the matching \(M\) to
\(\per(L(T))\) is
\[
    \prod_{v\notin V(M)} d(v).
\]
Therefore
\[
    \per(L(T))
    =
    \sum_{M\in \M(T)}
    \prod_{v\notin V(M)} d(v).
\]
Dividing by \(\prod_{v\in V(T)}d(v)\), we get
\[
    \pi(T)
    =
    \sum_{M\in \M(T)}
    \frac{\prod_{v\notin V(M)} d(v)}
         {\prod_{v\in V(T)} d(v)}
    =
    \sum_{M\in \M(T)}
    \prod_{uv\in M}\frac{1}{d(u)d(v)}.
\]
\end{proof}

We now apply the matching formula to the family \(T(a_1,\ldots,a_m)\).

\begin{proposition}\label{prop:recurrence}
Let \(m\ge2\), and put \(d_i=d_{T(a_1,\ldots,a_m)}(x_i)\).  Define
\(f_0=1\), \(f_1=1+a_1/(3d_1)\), and, for \(2\le i\le m\),
\[
    f_i=\left(1+\frac{a_i}{3d_i}\right)f_{i-1}
        +\frac{1}{d_{i-1}d_i}f_{i-2}.
\]
Then
\[
    \pi(T(a_1,\ldots,a_m))
    =
    \left(\frac32\right)^{a_1+\cdots+a_m}f_m.
\]
\end{proposition}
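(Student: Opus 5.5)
The plan is to evaluate the matching sum of Lemma~\ref{lem:matching} by splitting each matching \(M\) of \(T=T(a_1,\ldots,a_m)\) into its restriction to the core path and its restriction to the pendant paths. The tree's edges are of three kinds: core edges \(x_{i-1}x_i\), inner pendant edges \(x_iy\), and outer pendant edges \(yz\). On a pendant path \(x_i-y-z\) we have \(d(y)=2\) and \(d(z)=1\). Hence the edge \(yz\) has weight \(1/2\), and the edge \(x_iy\) has weight \(1/(2d_i)\). Each matching is determined by three choices: a set \(C\) of core edges forming a matching of the path \(x_1\cdots x_m\); for each \(x_i\), at most one inner pendant edge, allowed only if \(x_i\) is not covered by \(C\); and, on every pendant path whose inner edge is unused, whether to take \(yz\).

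First step: fix the status of \(x_i\) and sum over the pendant paths at \(x_i\).
\begin{itemize}
\item If \(x_i\) is covered by \(C\), each of the \(a_i\) pendant paths contributes independently a factor \(1+\tfrac12=\tfrac32\), giving \((3/2)^{a_i}\).
\item If \(x_i\) is uncovered, there is also the option of using \(x_iy\) on one of the \(a_i\) paths; that path then contributes \(1/(2d_i)\) and the other \(a_i-1\) paths contribute \((3/2)^{a_i-1}\). The total is
\[
(3/2)^{a_i}+a_i\cdot\frac{1}{2d_i}(3/2)^{a_i-1}=(3/2)^{a_i}\left(1+\frac{a_i}{3d_i}\right).
\]
\end{itemize}
Factoring out \((3/2)^{a_1+\cdots+a_m}\) therefore gives
\[
\pi(T)=\left(\tfrac32\right)^{\sum a_i}\sum_{C}\ \prod_{x_{i-1}x_i\in C}\frac{1}{d_{i-1}d_i}\prod_{x_i\notin V(C)}\left(1+\frac{a_i}{3d_i}\right),
\]
where \(C\) ranges over the matchings of the core path.

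Second step: show that this weighted path-matching sum equals \(f_m\). Define \(F_i\) as the same sum restricted to the initial path \(x_1\cdots x_i\), with the degrees \(d_j\) still taken in the whole tree \(T\), and set \(F_0=1\). Then \(F_1=1+a_1/(3d_1)=f_1\). For \(i\ge2\), condition on whether the edge \(x_{i-1}x_i\) lies in \(C\):
\begin{itemize}
\item if it does not, \(x_i\) is uncovered and the sum is \((1+a_i/(3d_i))F_{i-1}\);
\item if it does, the remaining edges of \(C\) form a matching of \(x_1\cdots x_{i-2}\), and the sum is \(\frac{1}{d_{i-1}d_i}F_{i-2}\).
\end{itemize}
This is exactly the stated recurrence, so \(F_m=f_m\) by induction.

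The only delicate point is the bookkeeping in the first step. The degrees \(d_i\) must be the global degrees of \(x_i\) in \(T\), which is why the recurrence for \(F_i\) keeps them even on truncated paths. One must also check that the inner-pendant option is counted only for uncovered \(x_i\), and that at most one pendant edge is incident to \(x_i\) in any matching. Both follow directly from the matching condition, so I expect no real obstacle beyond writing the factorization cleanly.
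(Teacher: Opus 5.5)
Your proposal is correct and follows essentially the same route as the paper. Both arguments apply the matching formula of Lemma~\ref{lem:matching} and sum out the pendant paths at each \(x_i\) according to whether \(x_i\) is covered by a core edge (\((3/2)^{a_i}\) versus \((3/2)^{a_i}(1+a_i/(3d_i))\)); both then factor out \((3/2)^{a_1+\cdots+a_m}\) and obtain the recurrence by conditioning on whether \(x_{i-1}x_i\) is in the core matching, with the global degrees \(d_i\) kept throughout.
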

\begin{proof}
We use Lemma~\ref{lem:matching}.  In the weighted matching formula, each edge
\(uv\) has weight \(1/(d(u)d(v))\).
First consider one pendant path of length two attached to \(x_i\):
\[
    x_i-y-z,
\]
where \(z\) is a leaf.  The edge \(yz\) has weight
\[
    \frac{1}{d(y)d(z)}=\frac12.
\]
If \(x_i\) is already matched to a core edge, then the edge \(x_iy\) cannot be
used.  Thus this pendant path contributes either the empty matching or the edge
\(yz\), so its contribution is
\[
    1+\frac12=\frac32.
\]
Now suppose \(x_i\) is not matched to a core edge.  There are two possibilities
for the \(a_i\) pendant paths attached to \(x_i\).  If \(x_i\) is not matched
to any of them, then each pendant path contributes \(3/2\), giving
\[
    \left(\frac32\right)^{a_i}.
\]
If \(x_i\) is matched to one of them, then there are \(a_i\) choices for the
chosen path.  The chosen edge has weight
\[
    \frac{1}{2d_i},
\]
and the remaining \(a_i-1\) pendant paths each contribute \(3/2\).  This gives
\[
    a_i\cdot \frac{1}{2d_i}
    \left(\frac32\right)^{a_i-1}.
\]
Therefore, when \(x_i\) is not matched to a core edge, the total contribution
from the pendant paths at \(x_i\) is
\[
    \left(\frac32\right)^{a_i}
    +
    a_i\cdot \frac{1}{2d_i}
    \left(\frac32\right)^{a_i-1}
    =
    \left(\frac32\right)^{a_i}
    \left(1+\frac{a_i}{3d_i}\right).
\]
We now factor out
\[
    \left(\frac32\right)^{a_1+\cdots+a_m}
\]
from all pendant paths.  After this factor is removed, only a weighted matching
problem on the core path
\[
    x_1-x_2-\cdots-x_m
\]
remains.  In this reduced problem, a core vertex \(x_i\) not covered by
a chosen core edge contributes
\[
    1+\frac{a_i}{3d_i},
\]
and a chosen core edge \(x_{i-1}x_i\) contributes
\[
    \frac{1}{d_{i-1}d_i}.
\]
Let \(f_i\) be the reduced contribution from the first \(i\) core vertices.
Then \(f_0=1\), and
\[
    f_1=1+\frac{a_1}{3d_1}.
\]
For \(i\ge2\), there are two possibilities.  If \(x_i\) is not matched to
\(x_{i-1}\), the contribution is
\[
    \left(1+\frac{a_i}{3d_i}\right)f_{i-1}.
\]
If \(x_i\) is matched to \(x_{i-1}\), then the edge \(x_{i-1}x_i\) contributes
\[
    \frac{1}{d_{i-1}d_i},
\]
and the remaining contribution comes from the first \(i-2\) core vertices.
This gives
\[
    \frac{1}{d_{i-1}d_i}f_{i-2}.
\]
Hence
\[
    f_i=
    \left(1+\frac{a_i}{3d_i}\right)f_{i-1}
    +
    \frac{1}{d_{i-1}d_i}f_{i-2}.
\]
Finally, multiplying the reduced contribution \(f_m\) by the factored term
gives
\[
    \pi(T(a_1,\ldots,a_m))
    =
    \left(\frac32\right)^{a_1+\cdots+a_m}f_m.
\]
\end{proof}

\section{The counterexamples}

We now use Proposition~\ref{prop:recurrence} to obtain the three families of
counterexamples.  

\subsection{Odd order}

Let
\[
    A_t=T(3,t,3),\qquad t\ge3.
\]
Then
\[
    |V(A_t)|=3+2(3+t+3)=2t+15,
\]
so \(A_t\) has odd order.  The core degrees are
\[
    (d_1,d_2,d_3)=(4,t+2,4).
\]
Using Proposition~\ref{prop:recurrence}, we get
\[
    \pi(A_t)
    =
    \left(\frac32\right)^{t+6}
    \frac{5(5t+9)}{12(t+2)}.
\]
Put \(n=|V(A_t)|=2t+15\).  Then
\[
    \frac{n-3}{2}=t+6,
\]
so the conjectured odd upper bound is
\[
    2\left(\frac32\right)^{t+6}.
\]
Therefore
\[
\begin{aligned}
\pi(A_t)
-
2\left(\frac32\right)^{t+6}
&=
\left(\frac32\right)^{t+6}
\left(
\frac{5(5t+9)}{12(t+2)}-2
\right)  \\
&=
\left(\frac32\right)^{t+6}
\frac{t-3}{12(t+2)}.
\end{aligned}
\]
This is positive for every \(t\ge4\).  Hence the odd upper bound fails for
every odd \(n\ge23\).  The first example in this family is
\[
    A_4=T(3,4,3),
\]
which has \(23\) vertices.

There is also a failure of the equality statement at \(t=3\).  In this case
\(A_3=T(3,3,3)\) has \(21\) vertices, and the above difference is zero.  Thus
\(T(3,3,3)\) attains the conjectured upper bound.  However it is not
isomorphic to the conjectured equality case \(S(21,10)\).  Indeed,
\(T(3,3,3)\) has degree sequence
\[
    1^9,2^9,4,4,5,
\]
whereas \(S(21,10)\) has degree sequence
\[
    1^{10},2^{10},10.
\]
Thus the equality statement in the odd case already fails at \(n=21\).

\subsection{The case \texorpdfstring{\(n=4k\)}{n=4k}}

Let
\[
    B_t=T(t,t,t,t),\qquad t\ge4.
\]
Then
\[
    |V(B_t)|=4+2(4t)=8t+4,
\]
which is divisible by \(4\), since \(8t+4=4(2t+1)\).  The core degrees are
\[
    (d_1,d_2,d_3,d_4)=(t+1,t+2,t+2,t+1).
\]
Using Proposition~\ref{prop:recurrence}, we get
\[
\pi(B_t)
=
\left(\frac32\right)^{4t}
\frac{
2(128t^4+576t^3+1152t^2+1080t+405)
}{
81(t+1)^2(t+2)^2
}.
\]
Put \(n=|V(B_t)|=8t+4\).  The conjectured upper bound in the case \(n=4k\) is
\[
    \frac{4n^2+8n+24}{n(n+4)}
    \left(\frac32\right)^{(n-6)/2}.
\]
Since
\[
    \frac{n-6}{2}=4t-1,
\]
a direct simplification gives
\[
\begin{aligned}
&\pi(B_t)
-
\frac{4n^2+8n+24}{n(n+4)}
\left(\frac32\right)^{(n-6)/2}  \\
&\qquad =
\left(\frac32\right)^{4t}
\frac{
 t^2(160t^3-280t^2-1197t-873)
}{
162(t+1)^2(t+2)^2(2t+1)
}.
\end{aligned}
\]
The denominator is positive.  To see that the numerator is positive for
\(t\ge4\), write \(t=s+4\).  Then
\[
160t^3-280t^2-1197t-873
=
160s^3+1640s^2+4243s+99,
\]
which is positive for every \(s\ge0\).  Therefore the difference is positive
for every \(t\ge4\).  Hence the \(n=4k\) upper bound fails for every
\(n=8t+4\) with \(t\ge4\).  The first example in this family is
\[
    B_4=T(4,4,4,4),
\]
which has \(36\) vertices.

\subsection{The case \texorpdfstring{\(n=4k+2\)}{n=4k+2}}

Let
\[
    C_t=T(t,t,t+1,t),\qquad t\ge4.
\]
Then
\[
    |V(C_t)|=4+2(t+t+(t+1)+t)=8t+6.
\]
Since
\[
    |V(C_t)|=8t+6=4(2t+1)+2,
\]
this tree falls under the \(n\equiv2\pmod4\) case of the conjecture.
The core degrees are
\[
    (d_1,d_2,d_3,d_4)=(t+1,t+2,t+3,t+1).
\]
Using Proposition~\ref{prop:recurrence}, we get
\[
\pi(C_t)
=
\left(\frac32\right)^{4t+1}
\frac{
2(128t^4+704t^3+1536t^2+1512t+567)
}{
81(t+1)^2(t+2)(t+3)
}.
\]
Put \(n=|V(C_t)|=8t+6\).  The conjectured upper bound in the case \(n=4k+2\)
is
\[
    \frac{4n^2+8n+40}{(n+2)^2}
    \left(\frac32\right)^{(n-6)/2}.
\]
Since
\[
    \frac{n-6}{2}=4t,
\]
a direct simplification gives
\[
\begin{aligned}
&\pi(C_t)
-
\frac{4n^2+8n+40}{(n+2)^2}
\left(\frac32\right)^{(n-6)/2}  \\
&\qquad =
\left(\frac32\right)^{4t}
\frac{
160t^4-200t^3-1239t^2-891t-162
}{
216(t+1)^2(t+2)(t+3)
}.
\end{aligned}
\]
Again the denominator is positive.  To prove positivity of the numerator for
\(t\ge4\), write \(t=s+4\).  Then
\[
\begin{aligned}
160t^4-200t^3-1239t^2-891t-162
&=
160s^4+2360s^3+11721s^2  \\
&\qquad +20557s+4610,
\end{aligned}
\]
which is positive for every \(s\ge0\).  Hence the difference is positive for
every \(t\ge4\).  Therefore the \(n=4k+2\) upper bound fails for every
\(n=8t+6\) with \(t\ge4\).  The first example in this family is
\[
    C_4=T(4,4,5,4),
\]
which has \(38\) vertices.

\section{Concluding remark}

The examples above show that the proposed extremal trees in Conjecture 1.2 \cite{WDL2025} are
not globally extremal. Determining the exact maximum value of the Laplacian ratio among $n$-vertex trees remains open.

\bibliographystyle{plainurl}
\bibliography{references}

\end{document}